\title{\Large\bf Jensen's inequality in geodesic spaces with lower bounded curvature}
\author{Quentin Paris\footnote{HSE University, Faculty of Computer Science, Moscow, Russia. This work has been funded by the  Russian Academic Excellence Project '5-100'. Email:\url{qparis@hse.ru}}}
\date{}
\numberwithin{equation}{section}
\newtheorem{thm}{Theorem}[section]
\newtheorem{lem}[thm]{Lemma}
\newtheorem{defi}[thm]{Definition}
\newtheorem{rem}[thm]{Remark}
\newcommand{\R}{\mathbb R}
\newcommand{\e}{\varepsilon}
\newcommand{\curv}{\mathrm{curv}}
\begin{document}

\maketitle
\begin{abstract}
Let $(M,d)$ be a separable and complete geodesic space with curvature lower bounded, by $\kappa\in \R$, in the sense of Alexandrov. Let $\mu$ be a Borel probability measure on $M$, such that $\mu\in\mathcal P_2(M)$, and that has at least one barycenter $x^{*}\in M$. We show that for any geodesically $\alpha$-convex function $f:M\to \R$, for $\alpha\in \R$, the inequality 
\[f(x^*)\le \int_M (f -\frac{\alpha}{2}d^2(x^*,.))\,{\rm d}\mu,\]
holds provided $f$ is locally Lipschitz at $x^*$ and either positive or in $L^1(\mu)$. Our proof relies on the properties of tangent cones at barycenters and on the existence of gradients for semiconcave functions in spaces with lower bounded curvature.
\end{abstract}

\section{Introduction}

Jensen's inequality encodes a powerful connection between convexity and probability theory with consequences in a substantial part of mathematics.
In its classical form, established in the pioneering paper by~\cite{Jens06}, the result states that given a convex function $f:\R^d\to\R$ and a Borel probability measure $\mu$ on $\R^d$,  
\[f(x^*)\le \int_{\R^d}f\mathrm{d}\mu,\quad\mbox{where}\quad x^*=\int_{\R^d}x\,\mathrm{d}\mu(x),\]
whenever the integrals make sense. Since then, a number of natural extensions of the result have been proposed in infinite dimensional Hilbert or Banach spaces. 

The generalization of Jensen's inequality to more abstract metric spaces, with no linear structure, requires an adapted definition of mean value and convexity. In the context of geodesic spaces, considered in the present paper, such adapted notions are usually given by barycenters and geodesic convexity respectively. We first briefly recall these concepts and refer the reader to Section \ref{sec:prelim} for precise definitions.

Given a metric space $(M,d)$, we denote $\mathcal P_2(M)$ the set of Borel probability measures $\mu$ on $M$ such that, for all $x\in M$,
\[\mathcal V_{\mu}(x):=\int_M d^2(x,y)\,{\rm d}\mu(y)<+\infty.\]
Given $\mu\in\mathcal P_2(M)$, we call $\mathcal V_{\mu}:M\to \R_+$ its variance functional, denote \[\mathcal V^*_{\mu}:=\inf_{x\in M}\mathcal V_{\mu}(x),\] 
and call barycenter of $\mu$ any $x^*\in M$ such that $\mathcal V_{\mu}(x^*)=\mathcal V^*_{\mu}$.  Barycenters provide a natural generalization\footnote{Note for instance that if $(M,d)=(\R^p,\|.-.\|_2)$ and if $\mu\in\mathcal P_2(M)$, then $x^*:=\int x\,\mathrm{d}\mu(x)$ is the unique minimizer of 
\[x\in M\mapsto\int \|x-y\|^2_2\,\mathrm{d}\mu(y).\]} of the notion of mean value of a probability measure when $M$ has no linear structure.
While alternative notions of mean value in a metric space have been proposed, barycenters are often favored for their simple interpretation and constructive definition as the solution of an optimization problem. In full generality, a barycenter may not exist and, if it does, needs not be unique. The questions of existence and uniqueness of barycenters have been addressed in a number of settings and we refer the reader to~\cite{Stu03,AguCar11,Afs11,ohta2012barycenters,Yok16,KimPas17,GouLou17,AhiGouPar19,GouParRigStr19} and~\cite{HucElt20} for discussions on this topic in different settings.

Given a geodesic space $(M,d)$, and $\alpha\in\R$, a function $f:M\to \R$ is called geodesically $\alpha$-convex if, for every geodesic $\gamma:[0,1]\to M$, the function
\[t\in[0,1]\mapsto f(\gamma(t))-\frac{\alpha}{2}d^2(\gamma(0),\gamma(t)),\]
is convex. This notion reduces to the classical definition of convexity (resp. strong convexity) in the Euclidean setting when $\alpha=0$ (resp. $\alpha>0$).

In the context of Riemannian manifolds,~\cite{EmeMok91} establish Jensen's inequality for exponential barycenters. The problem is addressed in~\cite{Ken90,Stu03,Kuw09} and~\cite{Yok16} in the context of more abstract metric spaces with upper curvature bound, in the sense of Alexandrov, and in~\cite{Kuw14} for convex metric spaces. 

Less seems to be known in the context of metric spaces with a lower curvature bound in the sense of Alexandrov (Definition \ref{def:4pc}). The central role of such metric spaces has been greatly emphasized since the celebrated compactness theorem of M. Gromov~\citep{Grom81}, stating that for all integers $n\ge 0$, all $\kappa\in \R$ and all $D>0$, the set of length spaces with Hausdorff dimension at most $n$, curvature lower bounded by $\kappa$ and diameter at most $D$ is compact in the Gromov-Hausdorff metric.

A notable exception is the case of the $2$-Wasserstein space $M=\mathcal W_2(\Omega)$ over a Polish (i.e., separable and complete) space $\Omega$, known to be a geodesic space of curvature lower bounded by $0$ if, and only if, $\Omega$ satisfies the same property~\cite[Proposition 2.10]{Stu06}. In the case where $\Omega=\R^d$, and $\mu$ is a probability measure with finite support over $M$,~\cite{AguCar11} prove Jensen's inequality for a number of classical geodesically convex functionals. This result is later generalized by~\cite{KimPas17} to the case where $\Omega$ is a compact Riemannian manifold and $\mu$ is a sufficiently well behaved Borel probability measure over $M$.

In this paper, we generalize available results in the litterature and prove the validity of Jensen's inequality in general Polish geodesic spaces with curvature lower bounded, by some $\kappa\in \R$, in the sense of Alexandrov. Precisely, we prove the following result.

\begin{thm}
\label{thm:main}
Let $(M,d)$ be a Polish geodesic space with curvature lower bounded by some $\kappa\in \R$, in the sense of Alexandrov (Definition \ref{def:4pc}). Let $\mu\in \mathcal P_2(M)$ and suppose $\mu$ admits a barycenter $x^*\in M$. Let $\alpha\in \R$ and $f:M\to \R$ be geodesically $\alpha$-convex and locally Lipschitz at $x^*$. Then if $f$ is either positive, or in $L^1(\mu)$, we have
\[f(x^*)\le \int_M f\,{\rm d}\mu-\frac{\alpha}{2}\mathcal V^*_{\mu}.\]
\end{thm}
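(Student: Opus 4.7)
The plan is to linearize $f$ at $x^*$ via a subgradient-like object extracted from the gradient of the semiconcave function $-f$, integrate against $\mu$, and then use a first-order barycenter condition coming from Alexandrov's angle comparison (equivalently, from the semiconcavity of the squared distance function on a CBB space). By separability of $M$, one can measurably select, for $\mu$-almost every $y$, a geodesic $\gamma_y:[0,1]\to M$ from $x^*$ to $y$ and set $v_y:=\dot\gamma_y(0)\in T_{x^*}M$, so that $|v_y|=d(x^*,y)$. Since $f$ is locally Lipschitz at $x^*$, the convex function $t\mapsto f(\gamma_y(t))-\tfrac{\alpha}{2}t^2 d^2(x^*,y)$ has finite right-derivative at $0$, equal to $D_{v_y}f(x^*)$, and its slope inequality between $t=0$ and $t=1$ yields the pointwise lower bound
\[ f(y) \ge f(x^*)+D_{v_y}f(x^*)+\frac{\alpha}{2}d^2(x^*,y). \]

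The curvature lower bound enters next. Since $f$ is $\alpha$-convex and locally Lipschitz near $x^*$, the function $-f$ is semiconcave on a neighbourhood of $x^*$, and the gradient-existence theorem for semiconcave functions on CBB$(\kappa)$ spaces (alluded to in the abstract) produces a vector $g:=\nabla(-f)(x^*)\in T_{x^*}M$ with $D_v(-f)(x^*)\le\langle g,v\rangle$ for every $v\in T_{x^*}M$, equivalently $D_vf(x^*)\ge-\langle g,v\rangle$. The estimate $|\langle g,v_y\rangle|\le|g|\,d(x^*,y)$ combined with $\mu\in\mathcal P_2(M)$ makes the cross term $\mu$-integrable, and the positivity or $L^1(\mu)$ hypothesis on $f$ makes $\int_M f\,\mathrm{d}\mu$ meaningful; integrating the pointwise bound then gives
\[ \int_M f\,\mathrm{d}\mu \ge f(x^*) - \int_M\langle g,v_y\rangle\,\mathrm{d}\mu(y) + \frac{\alpha}{2}\mathcal V^*_\mu. \]

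It remains to show $\int_M\langle g,v_y\rangle\,\mathrm{d}\mu(y)\le 0$; the case $g=0$ is trivial. Otherwise, let $\eta$ be the unit-speed geodesic emanating from $x^*$ in the direction $g/|g|$. For each $y$, Alexandrov's comparison applied to the triangle $(x^*,\eta(t),y)$ gives $\angle(g,v_y)\ge\widetilde\angle$, the comparison angle at $x^*$, and the model-space law of cosines yields
\[ d^2(\eta(t),y) \le d^2(x^*,y) - \frac{2t}{|g|}\langle g,v_y\rangle + O(t^2). \]
Integrating against $\mu$ (the error term being $\mu$-integrable via $\mu\in\mathcal P_2(M)$), using the minimality $\mathcal V_\mu(\eta(t))\ge\mathcal V^*_\mu$, dividing by $t$, and letting $t\downarrow 0$ yields $\int_M\langle g,v_y\rangle\,\mathrm{d}\mu(y)\le 0$, which closes the argument.

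The main obstacles are technical rather than conceptual: (i) measurably selecting the geodesics $y\mapsto\gamma_y$ and the initial velocities $v_y$; (ii) verifying local semiconcavity of $-f$ in a form compatible with the CBB gradient-existence theorem, given only the local Lipschitz hypothesis at $x^*$; and (iii) rigorously justifying integrability and the interchange of integration and limit in the first-variation step, for which the local Lipschitz bound $|D_vf(x^*)|\le L|v|$, the Cauchy--Schwarz-type estimate on $\langle g,v_y\rangle$, and the $\mathcal P_2$ moment hypothesis (with Fatou if needed) should combine to give the required control.
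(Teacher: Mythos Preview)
Your proposal is correct and follows essentially the paper's route: the same pointwise inequality from $\alpha$-convexity, the same replacement of the directional derivative by $\langle g,\cdot\rangle_{x^*}$ via the gradient $g=\nabla(-f)(x^*)$ (the paper's Lemma~\ref{lem:grad}), and the same appeal to a first-order barycenter condition to kill the cross term after integration. The only difference is that the paper quotes the identity $\int_M\langle\log_{x^*}(x),u\rangle_{x^*}\,\mathrm{d}\mu(x)=0$ for \emph{every} $u\in T_{x^*}M$ as a known result (Lemma~\ref{lem:tpstarisflat}), whereas you sketch a direct first-variation proof of the one-sided bound for $u=g$; be aware that your sketch assumes $g/|g|$ is the initial direction of an actual geodesic issuing from $x^*$, which need not hold since $T_{x^*}M$ is only the \emph{completion} of the set of geodesic directions, so a density/approximation step (which the cited lemma absorbs) is still required there.
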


The result calls for a particular comment. On the one hand, the result of~\cite{KimPas17} is more general than Theorem \ref{thm:main}, in the context of the Wasserstein space, since it does not impose restrictions on the sectional curvature of the base space $\Omega$. On the other hand, our result implies the validity of Jensen's inequality in $\mathcal W_2(\Omega)$ as soon as $\Omega$ is a Polish geodesic space with non-negative curvature, which needs not be a smooth manifold.

From a technical point of view, our proof relies on a two essential results from metric geometry. First, we invoke (see Lemma \ref{lem:tpstarisflat}) specific properties of tangent cones at barycenters of probability measures in spaces with lower bounded curvature as studied in~\cite{ohta2012barycenters,Yok12,GouParRigStr19} and~\cite{Gou20}. Second, we rely on (see Lemma \ref{lem:grad}) the existence of an appropriate notion of gradient for semi-concave functions on spaces with lower bounded curvature~\cite[see, e.g.,][]{Pet07}.

The paper is organized as follows. Section \ref{sec:prelim} introduces background in metric geometry, necessary for our result. Section \ref{sec:theproof} presents the proof of Theorem \ref{thm:main}. Finally, Appendix \ref{sec:appendix} details proofs of some key lemmas presented in the preliminary section.

\section{Preliminaries}
\label{sec:prelim}
In this section we summarize some definitions and results from metric geometry necessary for our main result. Most of the material gathered below can be found in classical texts such as~\citet{bgp12,BuBuIv01,plaut02} or in the book in preparation by~\cite{AleKapPet19}. We provide proofs for some of the key statements in Appendix \ref{sec:appendix} to clarify the presentation.

\subsection{Geodesic spaces with lower bounded curvature} 
Let $(M,d)$ be a metric space. We call path in $M$ a continuous map $\gamma:I\to M$ defined on an interval $I\subset \R$. The length $L(\gamma)\in[0,+\infty]$ of a path $\gamma:I\to M$ is defined by 
\begin{equation}
 \label{def:length}
 L(\gamma):=\sup_{n\ge 1,\,t_0\le\dots\le t_{n}\in I}\sum_{i=0}^{n-1}d(\gamma(t_i),\gamma(t_{i+1})).
 \end{equation}
 A path is called rectifiable if it has finite length. Given a path $\gamma:I\to M$ and an interval $J\subset I$, we denote $\gamma_J:J\to M$ the restriction of $\gamma$ to $J$. Two paths $\gamma_i:I_i\to M$, $i=1,2$, are said to be equivalent if there exist continuous, non-decreasing and surjective functions $\varphi_i:J\to I_i$ such that 
 $\gamma_1\circ\varphi_1=\gamma_2\circ\varphi_2$. In this case, $\gamma_1$ is said to be a reparametrisation of $\gamma_2$ and one checks that $L(\gamma_1)=L(\gamma_2)$. A path $\gamma:[a,b]\to M$ is said to have constant speed if, for all $a\le s\le t\le b$,
 \[L(\gamma_{[s,t]})=\frac{t-s}{b-a}L(\gamma).\]
 If $-\infty<a< b<+\infty$ and $\gamma:[a,b]\to M$ is rectifiable, then for any $\tau>0$, $\gamma$ admits a constant speed reparametrization $[0,\tau]\to M$. A path $\gamma:[0,\tau]\to M$ is said to issue from $x$ if $\gamma(0)=x$ and is said to connect $x$ to $y$ if in addition $\gamma(\tau)=y$. It follows from the definition of length, and the triangular inequality, that $d(x,y)\le L(\gamma)$ for any path $\gamma$ connecting $x$ to $y$. In particular,
 \begin{equation}
 \label{dintrinsic0}
 d(x,y)\le \bar d(x,y):=\inf_{\gamma} L(\gamma),
\end{equation}
 where the infimum is taken over all such paths $\gamma$. The function $\bar d$ defines a $[0,+\infty]$-valued metric on $M$ called the length metric. 
 
 \begin{defi}
 The space $M$ is called a length space if $d=\bar d$. A length space $(M,d)$ is said to be a geodesic space if, in addition, the infimum in \eqref{dintrinsic0} is always attained. In a geodesic space, a path connecting $x$ to $y$ whose length is equal to $d(x,y)$ is called a shortest path connecting $x$ to $y$. A shortest path $\gamma:[0,\tau]\to M$, with constant speed, is called a geodesic. 
  \end{defi}
  
 One checks that a path $\gamma:[0,\tau]\to M$ is a geodesic iff, for all $0\le s\le t\le \tau$,
\[
d(\gamma(s),\gamma(t))=\frac{t-s}{\tau}d(\gamma(0),\gamma(\tau)).
\]
For a geodesic $\gamma:[0,\tau]\to M$, the metric speed
\[|\gamma'|:=\frac{d(\gamma(0),\gamma(t))}{t},\]
is constant by construction, for $t\in(0,\tau]$. 

For $\kappa\in \R$, a remarkable geodesic space is the $\kappa$-plane  $(M^2_{\kappa},d_{\kappa})$ defined as the unique (up to isometry) $2$-dimensional complete and simply connected Riemannian manifold with constant sectional curvature $\kappa$, equipped with its Riemannian distance $d_{\kappa}$. The diameter $D_{\kappa}$ of $M^{2}_{\kappa}$ is 
\[D_{\kappa}:=\left\{\begin{array}{cc}
     +\infty&\mbox{if}\quad\kappa\le0,\\
     \pi/\sqrt{\kappa}&\mbox{if}\quad\kappa>0.
\end{array}\right.\]
For $\kappa\in \R$, there is a unique geodesic $[0,1]\to M$ connecting $x$ to $y$ in $M^2_{\kappa}$ provided $d_{\kappa}(x,y)<D_{\kappa}$. 

Given a metric space $(M,d)$, we call triangle in $M$ any set of three points $\{p,x,y\}\subset M$. We call it non-degenerate if all three points are distinct. For $\kappa\in\R$, a comparison triangle for $\{p,x,y\}\subset M$ in $M^2_{\kappa}$ is an isometric copy $\{\bar p,\bar x,\bar y\}\subset M^2_{\kappa}$ of $\{p,x,y\}$ in $M^2_{\kappa}$ (i.e., pairwise distances are preserved). Such a comparison triangle always exists and is unique (up to an isometry) provided the perimeter \[\mathrm{peri}\{p,x,y\}:=d(p,x)+d(p,y)+d(x,y)< 2D_{\kappa}.\]
If $\{p,x,y\}$ is non-degenerate and $\mathrm{peri}\{p,x,y\}<2D_{\kappa}$, the triangular inequality implies that \[d(p,x),d(p,y),d(x,y)<D_{\kappa}.\]

Given $\kappa\in \R$, $p,x,y\in M$ with $p\notin\{x,y\}$ and $\textrm{peri}\{p,x,y\}<2D_{\kappa}$, we define the comparison angle $\sphericalangle^{\kappa}_p(x,y)\in[0,\pi]$ at $p$ by 
 \[\cos\sphericalangle^{\kappa}_p(x,y):=\left\{
 \begin{array}{ll}
 \dfrac{d^2(p,x)+d^2(p,y)-d^2(x,y)}{2d(p,x)d(p,y)}&\mbox{ if }\kappa=0,\vspace{0.2cm}\\
 \dfrac{c_{\kappa}(d(x,y))-c_{\kappa}(d(p,x))\cdot c_{\kappa}(d(p,y))}{\kappa\cdot s_{\kappa}(d(p,x))s_{\kappa}(d(p,y))}&\mbox{ if }\kappa\ne 0,
 \end{array}
 \right.\]
  where, for $r\ge 0$, we have $c_{\kappa}(r)=s'_{\kappa}(r)$ and
 \begin{equation}
 \label{def:csk}
 s_{\kappa}(r):=\left\{
 \begin{array}{ll}
 \sin(r\sqrt\kappa)/\sqrt\kappa &\mbox{ if }\kappa>0,\\
 \sinh(r\sqrt{-\kappa})/\sqrt{-\kappa}&\mbox{ if }\kappa< 0.
 \end{array}
 \right.
 \end{equation}
When $\mathrm{peri}\{p,x,y\}\ge 2D_{\kappa}$, we declare the angle $\sphericalangle^{\kappa}_p(x,y)$ undefined. Note that the comparison angle $\sphericalangle^{\kappa}_p(x,y)$ corresponds to the Riemannian angle at $\bar p$ between the two unique geodesics connecting $\bar p$ to $\bar x$ and $\bar y$ respectively in $M^2_{\kappa}$ where $\{\bar p,\bar x,\bar y\}\subset M^2_{\kappa}$ denotes a comparison triangle for $\{p,x,y\}$. 
\begin{defi}
\label{def:4pc}
Given $\kappa\in \R$, a metric space $(M,d)$ is said to have curvature lower bounded by $\kappa$, which we denote by ${\curv}(M)\ge \kappa$, if for all $p,x,y,z\in M$, such that $p\notin\{x,y,z\}$, we have
\begin{equation}
\label{eq:4pc}
\sphericalangle^{\kappa}_{p}(x,y)+\sphericalangle^{\kappa}_{p}(x,z)+\sphericalangle^{\kappa}_{p}(y,z)\le 2\pi,
\end{equation}
when all three angles are defined.
\end{defi}
Definition \ref{def:4pc} is of global nature as it requires comparison \eqref{eq:4pc} to hold for all quadruples $p,x,y,z\in M$ for which angles at $p$ are defined. A globalization result due to~\cite{bgp12} states that, when $M$ is a complete length space, then it has curvature lower bounded by $\kappa$ in the sense of Definition \ref{def:4pc} iff, for all $p\in M$, comparison \eqref{eq:4pc} holds for all $\{x,y,z\}$ in a neighborhood of $p$. In the case of geodesic spaces, we can give the following equivalent characterization of lower bounded curvature.
\begin{thm}
\label{thm:curvequiv}
Let $(M,d)$ be a geodesic space and $\kappa\in \R$. Then the following statements are equivalent.
\begin{enumerate}[label=\rm{(\arabic*)}]
\item $\curv(M)\ge \kappa$ in the sense of Definition \ref{def:4pc}.
\item\label{it:pointsonsides} For all $p,x,y\in M$ with $p\notin\{x,y\} $ and $\mathrm{peri}\{p,x,y\}< 2D_{\kappa}$, and for any geodesics $\gamma_{x},\gamma_y:[0,1]\to M$ connecting $p$ to $x$ and $p$ to $y$ respectively, we have
\[\forall s,t\in[0,1],\quad d(\gamma_x(s),\gamma_y(t))\ge d_{\kappa}( \bar\gamma_x(s),\bar\gamma_y(t)),\]
where, given a comparison triangle $\{\bar p,\bar x,\bar y\}$ of $\{p,x,y\}$ in $M^2_{\kappa}$, $\bar\gamma_x,\bar\gamma_y:[0,1]\to M^2_{\kappa}$ are geodesics (which are unique if the triangle is non-degenerate) connecting $\bar p$ to $\bar x$ and $\bar p$ to $\bar y$ respectively. 
\end{enumerate}
\end{thm}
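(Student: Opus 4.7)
The equivalence is classical in Alexandrov geometry, and the plan is to treat the two implications separately, leveraging standard computations in the model plane $M^{2}_{\kappa}$ as developed in~\cite{bgp12,BuBuIv01}.

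For $(2)\Rightarrow(1)$, I fix points $p,x,y,z\in M$ with $p\notin\{x,y,z\}$ and all three comparison angles defined, and choose geodesics $\gamma_x,\gamma_y,\gamma_z\colon[0,1]\to M$ from $p$ to the respective endpoints. Rewriting (2) via the $M^{2}_{\kappa}$ law of cosines (the comparison triangle $\{\bar p,\bar x,\bar y\}$ has vertex angle $\sphericalangle^\kappa_p(x,y)$ at $\bar p$) yields, for each pair,
\[\sphericalangle^\kappa_p(\gamma_i(s),\gamma_j(t))\ge \sphericalangle^\kappa_p(x_i,x_j),\qquad s,t\in(0,1];\]
the same hinge comparison applied to the initial sub-geodesics $\gamma_i|_{[0,s_0]}$ shows moreover that this quantity is non-increasing in $(s,t)$. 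The limit $\angle_p(\gamma_i,\gamma_j):=\lim_{s,t\downarrow 0}\sphericalangle^\kappa_p(\gamma_i(s),\gamma_j(t))$ therefore exists and dominates $\sphericalangle^\kappa_p(x_i,x_j)$. The conclusion (1) then follows from the classical fact that, in any geodesic space, the angles $\angle_p$ between geodesics issuing from a common point form a pseudo-metric on the space of directions at $p$ and any three of them satisfy the perimeter inequality $\angle_p(\gamma_x,\gamma_y)+\angle_p(\gamma_y,\gamma_z)+\angle_p(\gamma_x,\gamma_z)\le 2\pi$.

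For $(1)\Rightarrow(2)$, which is the Toponogov-type direction and the main obstacle, the plan is to extract from (1) the angle monotonicity statement: for any geodesic $\gamma\colon[0,1]\to M$ from $p$ to $x$ and any $q\in M$ (with the relevant perimeters $<2D_\kappa$), the map $s\mapsto \sphericalangle^\kappa_p(\gamma(s),q)$ is non-increasing. To this end I would apply (1) at the four-point configuration $\{\gamma(s),p,x,q\}$; since $\gamma(s)$ lies on a geodesic from $p$ to $x$ the comparison triangle $\{\gamma(s),p,x\}$ is degenerate, so $\sphericalangle^\kappa_{\gamma(s)}(p,x)=\pi$ and (1) collapses to the Alexandrov-type inequality
\[\sphericalangle^\kappa_{\gamma(s)}(p,q)+\sphericalangle^\kappa_{\gamma(s)}(x,q)\le \pi.\]
A purely trigonometric identity in $M^{2}_{\kappa}$ then converts this into the angle monotonicity at $p$. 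Applying it successively in both arguments of $\sphericalangle^\kappa_p(\gamma_x(s),\gamma_y(t))$ gives
\[\sphericalangle^\kappa_p(\gamma_x(s),\gamma_y(t))\ge \sphericalangle^\kappa_p(x,\gamma_y(t))\ge \sphericalangle^\kappa_p(x,y),\]
which by monotonicity of the law of cosines is exactly the hinge distance inequality in (2).

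The main technical obstacle is the trigonometric passage, in direction $(1)\Rightarrow(2)$, from the Alexandrov-type inequality at $\gamma(s)$ to the angle monotonicity at $p$: this is a classical but delicate computation in $M^{2}_{\kappa}$. A self-contained proof would essentially reproduce the relevant section of~\cite{bgp12} or~\cite{BuBuIv01}, together with the standard perimeter inequality for angles at a point needed in the other direction. I would prefer to invoke these references directly rather than recompute the model-plane identities here.
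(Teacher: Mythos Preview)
The paper does not actually prove Theorem~\ref{thm:curvequiv}; it is stated in Section~\ref{sec:prelim} as a standard characterization and implicitly deferred to the references~\cite{bgp12,BuBuIv01,AleKapPet19}, while the appendix only supplies proofs of Lemmas~\ref{lem:ctpm}, \ref{lem:logismeas}, \ref{lem:propdiff} and~\ref{lem:grad}. Your sketch follows the classical textbook route found in those references (the four-point condition at an interior point of a geodesic collapses to Alexandrov's inequality, which via model-plane trigonometry yields angle monotonicity and hence the hinge comparison; conversely, hinge comparison gives existence of angles dominating comparison angles, and the perimeter bound for angles at a vertex closes the loop), and is correct in outline, so there is nothing substantive to compare.
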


We end the paragraph with a few standard examples of Polish geodesic spaces with lower bounded curvature in the sense of Definition \ref{def:4pc}.
\begin{itemize}
    \item A complete and connected Riemannian manifold, with its Riemannian distance, is a Polish geodesic space with curvature lower bounded by $\kappa\in \R$, iff its sectional curvatures are all lower bounded by $\kappa$.
    \item The frontier $\partial K$ of a convex and compact subset $K\subset\R^d$ (with non-empty interior) equipped with its length metric (inherited from the induced euclidean distance) is a Polish geodesic space with curvature lower bounded by $0$.
    \item Given a Polish geodesic space $(\Omega,d)$ with curvature lower bounded by $0$, the $2$-Wasserstein space $\mathcal W_2(\Omega):=(\mathcal P_2(\Omega),W_2)$ over $\Omega$ is a Polish geodesic space with curvature lower bounded by $0$.
\end{itemize}

\subsection{Spaces of directions and tangent cones} 
Let $(M,d)$ be a geodesic space with lower bounded curvature in the sense of Definition \ref{def:4pc}. 

Given $p\in M$, we denote $\Gamma_p$ be the set of all non-trivial geodesics $\gamma:[0,\tau]\to M$ issuing from $p$. For $\gamma,\sigma\in\Gamma_p$, the angle between $\gamma$ and $\sigma$ is defined by
\[\sphericalangle_p(\gamma,\sigma):=\lim_{s,t\to 0}\sphericalangle^0_p(\gamma(s),\sigma(t)).\]
The angle $\sphericalangle_p:\Gamma^2_p\to [0,\pi]$ is well defined for geodesic spaces with lower bounded curvature, as a consequence of Theorem \ref{thm:curvequiv}, point \ref{it:pointsonsides}, and satisfies, for all $\gamma, \omega, \sigma\in \Gamma_p$,
\[\sphericalangle_p(\gamma,\sigma)\le \sphericalangle_p(\gamma,\omega)+\sphericalangle_p(\omega,\sigma).\]
The angle $\sphericalangle_p$ is therefore a pseudo-metric on $\Gamma_p$ and induces a metric on the quotient space $\Sigma'_{p}:=\Gamma_{p}/\sim$ where $\gamma\sim\sigma$ iff $\sphericalangle_p(\gamma,\sigma)=0$. We denote
\[\vec{\gamma}\in \Sigma'_p\] 
the equivalence class of $\gamma\in\Gamma_p$ for $\sim$. The completion $\Sigma_p$ of $\Sigma'_{p}$ is called the space of directions at $p$. Below we use the same symbol $\sphericalangle_p$ to denote the pseudo-metric on $\Gamma_p$, the metric on $\Sigma'_p$ or the metric on $\Sigma_p$.

Given a metric space $(\Omega,d)$, with diameter at most $\pi$, consider the equivalence relation $\approx$ on $\Omega\times\R_+$ defined by $(p,s)\approx(q,t)$ iff ($s=t=0$ or $(p,s)=(q,t)$). In other words, if $[p,s]$ denotes the class of $(p,s)$ for this relation, then $[p,s]=\{(p,s)\}$ if $s>0$ and $[p,0]=\Omega\times\{0\}$. The Euclidean cone over $\Omega$, denoted ${\rm cone}(\Omega)$, is the quotient set $(\Omega\times\R_+)/\approx$ equipped with the metric $d_c$ defined by 
\[d^2_c([p,s],[q,t]):=s^2-2st\cos d(p,q)+t^2.\]
We call $[p,0]$ the tip of the cone. 

The tangent cone $T_pM$ of $M$ at $p$ is defined as the Euclidean cone over the space of directions of $M$ at $p$, i.e.,
 \[T_pM:={\rm cone}(\Sigma_p).\]
 We denote $\|.-.\|_p$ the metric on $T_pM$ and $0_p$ the tip of $T_pM$. For $u=[\xi,s]\in T_p M$ and $\lambda\in\R_+$, we define $\lambda u:=[\xi,\lambda s]$. For $u=[\xi,s],v=[\zeta,t]\in T_p M$, we set $\|u\|_p:=\|u-0_p\|_p$ and $\langle u,v\rangle_p:= st\cos \sphericalangle_p(\xi,\zeta)$ so that 
\begin{equation}
\nonumber  
\|u-v\|^2_p=\|u\|^2_p-2\langle u,v\rangle_p+\|v\|^2_p.
\end{equation}

A useful alternative representation of $T_pM$ is obtained as follows. For any two geodesics $\gamma,\sigma\in \Gamma_p$, denote
\[|\gamma-\sigma|^2_p:=\lim_{t\to 0}\frac{d^2(\gamma(t),\sigma(t))}{t^2}.\]
Angles between elements of $\Gamma_p$ being well defined, the limit always exists and $|.-.|_p$ defines a pseudo metric on $\Gamma_p$. Denoting $\propto$ the equivalence relation on $\Gamma_p$ defined by $\gamma\propto\sigma$ iff $|\gamma-\sigma|_p=0$, we define $T'_pM$ as the quotient set $\Gamma_p/\propto$ equipped with the induced metric $|.-.|_p$. For $\gamma\in\Gamma_p$, we denote \[\dot\gamma\in T'_pM\] 
its class for relation $\propto$. 
\begin{lem}
\label{lem:ctpm}
The map  
\begin{equation}
\label{eq:themap}
\dot\gamma\in T'_pM\mapsto [\vec{\gamma},|\gamma'|]\in {\rm Cone}(\Sigma'_p), 
\end{equation}
is a well defined isometry and the completion of $T'_pM$ is isometric to $T_pM$.
\end{lem}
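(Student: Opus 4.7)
The plan is to establish a single key identity from which both claims of the lemma follow: for any $\gamma,\sigma\in\Gamma_p$,
$$\lim_{t\to 0}\frac{d^2(\gamma(t),\sigma(t))}{t^2}=|\gamma'|^2+|\sigma'|^2-2|\gamma'||\sigma'|\cos\sphericalangle_p(\gamma,\sigma).$$
To prove this, I would apply the Euclidean law of cosines defining $\sphericalangle^0_p(\gamma(t),\sigma(t))$ to the triangle $\{p,\gamma(t),\sigma(t)\}$. Because $\gamma$ and $\sigma$ are constant-speed geodesics, $d(p,\gamma(t))=t|\gamma'|$ and $d(p,\sigma(t))=t|\sigma'|$ for all sufficiently small $t>0$, whence
$$d^2(\gamma(t),\sigma(t))=t^2\bigl(|\gamma'|^2+|\sigma'|^2-2|\gamma'||\sigma'|\cos\sphericalangle^0_p(\gamma(t),\sigma(t))\bigr).$$
Dividing by $t^2$ and passing to the limit, $\sphericalangle^0_p(\gamma(t),\sigma(t))\to\sphericalangle_p(\gamma,\sigma)$ by the very definition of the angle between elements of $\Gamma_p$, which is well defined thanks to the lower curvature bound and Theorem \ref{thm:curvequiv}. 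This simultaneously proves existence of the limit defining $|\gamma-\sigma|_p^2$ and identifies it with the squared cone distance $\|[\vec\gamma,|\gamma'|]-[\vec\sigma,|\sigma'|]\|_p^2$ in $\mathrm{Cone}(\Sigma'_p)$.

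From this identity the first assertion follows at once. The map in \eqref{eq:themap} is well defined: if $\dot\gamma=\dot\sigma$, i.e.\ $|\gamma-\sigma|_p=0$, then the cone distance between $[\vec\gamma,|\gamma'|]$ and $[\vec\sigma,|\sigma'|]$ vanishes, forcing equality in $\mathrm{Cone}(\Sigma'_p)$. The same identity says the map is distance-preserving, hence an isometric embedding of $T'_pM$ into $\mathrm{Cone}(\Sigma'_p)$.

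For the second assertion, I would characterise the image of this embedding. Given any $\xi\in\Sigma'_p$ and any $s>0$, any representative $\gamma\in\Gamma_p$ of $\xi$ admits a linear reparametrisation lying in $\Gamma_p$ with direction $\xi$ and metric speed exactly $s$; thus the image equals $\mathrm{Cone}(\Sigma'_p)$ minus its tip, which is dense in $\mathrm{Cone}(\Sigma'_p)$ since the tip is the limit of $[\xi,1/n]$ for any $\xi$. Because $\Sigma'_p$ is dense in $\Sigma_p$ by construction of the completion, and the cone construction is continuous in the base point at fixed positive radius, the natural inclusion $\mathrm{Cone}(\Sigma'_p)\hookrightarrow T_pM=\mathrm{Cone}(\Sigma_p)$ is itself an isometric dense embedding. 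Composing these two density statements shows that $T'_pM$ embeds isometrically as a dense subset of $T_pM$, which is complete (a Euclidean cone over a complete metric space of diameter at most $\pi$ is complete), and is therefore isometric to the completion of $T'_pM$.

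The main obstacle is the key identity in the first paragraph. The subtlety is that the definition $\sphericalangle_p(\gamma,\sigma)=\lim_{s,t\to 0}\sphericalangle^0_p(\gamma(s),\sigma(t))$ involves two independent parameters, while $|\gamma-\sigma|_p^2=\lim_{t\to 0}d^2(\gamma(t),\sigma(t))/t^2$ uses a single one; collapsing the two is legitimate precisely because the joint limit exists, and this existence is exactly what the lower curvature bound guarantees through Theorem \ref{thm:curvequiv}. Once this point is accepted, the remainder of the argument is essentially bookkeeping.
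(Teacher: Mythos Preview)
Your argument is correct and tracks the paper's proof closely. The derivation of the key identity via the law of cosines and passage to the limit using the existence of the angle is exactly what the paper does, and your handling of well-definedness and the distance-preserving property is the same.

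There is one small but genuine difference in the second half. For the completion statement, the paper proves the general fact that, for any metric space $\Omega$, the cone over the completion $\bar\Omega$ is isometric to the completion of the cone over $\Omega$, by exhibiting an explicit isometry. You instead argue directly: the image of \eqref{eq:themap} is dense in $\mathrm{Cone}(\Sigma'_p)$, the latter embeds densely into $T_pM=\mathrm{Cone}(\Sigma_p)$, and $T_pM$ is complete since $\Sigma_p$ is complete; hence $T_pM$ is the completion of $T'_pM$. Both routes are valid; yours is shorter and tailored to the situation at hand, while the paper's yields a reusable general lemma. Incidentally, you are more careful than the paper about the tip: since every $\gamma\in\Gamma_p$ is non-trivial, $|\gamma'|>0$ and the map \eqref{eq:themap} indeed misses the tip, so ``isometry'' in the lemma should be read as ``isometry onto a dense subset'' --- a point the paper's surjectivity argument glosses over.
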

We report the proof of Lemma \ref{lem:ctpm} in the appendix. 

From now on, we'll therefore consider $T'_pM$ as a dense subset of $T_pM$ and identify $\dot\gamma\in T'_pM$ to the element $[\vec{\gamma},|\gamma'|]\in T_pM$. 

\subsection{Logarithmic map} 
Suppose that $(M,d)$ is a geodesic space with lower bounded curvature. For $p\in M$, we call logarithmic map at $p$ any map $\log_p:M\to T_pM$ such that, for all $x\in M$, 
\[\log_p(x)=\dot\gamma_x,\]
for some geodesic $\gamma_x:[0,1]\to M$ connecting $p$ to $x$. The next result shows that the choice of a sufficiently well behaved log map is possible provided $M$ is a Polish geodesic space. It was first cited as a remark in~\cite{GouParRigStr19} and proved in~\cite{Gou20}. 
\begin{lem}
\label{lem:logismeas}
Let $(M,d)$ be a Polish geodesic space with lower bounded curvature and equipped with its Borel $\sigma$-algebra. Then, for all $p\in M$, there exists a logarithmic map $\log_p:M\to T_pM$ which is measurable when $T_pM$ is equipped with the $\sigma$-algebra generated by open balls. 
\end{lem}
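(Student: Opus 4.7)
The plan is to proceed by a measurable selection argument followed by a verification that evaluating the ``initial velocity'' of a selected geodesic lands measurably in the tangent cone.

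First, I would work in the Polish space $C := \{\gamma:[0,1]\to M \text{ continuous}:\gamma(0)=p\}$ endowed with uniform convergence, which is Polish because $M$ is. Define the multifunction $\Gamma:M\rightrightarrows C$ by
\[\Gamma(x):=\{\gamma\in C:\gamma \text{ is a constant-speed geodesic with }\gamma(1)=x\}.\]
Since $(M,d)$ is geodesic, $\Gamma(x)\ne\emptyset$. I would check that the graph $\{(x,\gamma)\in M\times C:\gamma\in\Gamma(x)\}$ is closed: if $(x_n,\gamma_n)\to(x,\gamma)$, then $\gamma(0)=p$, $\gamma(1)=x$, and the constant-speed identity $d(\gamma(s),\gamma(t))=|t-s|\,d(p,x)$ passes to the limit from the same identity with $x_n$ in place of $x$. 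From a closed graph in a Polish target, the Kuratowski--Ryll-Nardzewski selection theorem (applied to the closed-valued measurable multifunction $\Gamma$) yields a Borel map $\sigma:M\to C$ with $\sigma(x)\in\Gamma(x)$. Define $\log_p(x):=\dot{\sigma(x)}\in T'_pM\subset T_pM$.

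Next, I would verify measurability of $\log_p$ with respect to the $\sigma$-algebra generated by open balls in $T_pM$. Given $v=[\xi,s]\in T_pM$ and $r>0$, one has
\[\log_p(x)\in B(v,r)\quad\Longleftrightarrow\quad d(p,x)^2+s^2-2\,s\,d(p,x)\cos\sphericalangle_p(\vec{\sigma(x)},\xi)<r^2.\]
Writing $\xi$ as a limit in $\Sigma_p$ of a sequence $(\vec{\tau_k})$ with $\tau_k\in\Gamma_p$, the angle on the right is the limit in $k$ of $\sphericalangle_p(\vec{\sigma(x)},\vec{\tau_k})$, and each of these equals $\lim_{s,t\downarrow 0}\sphericalangle^0_p(\sigma(x)(s),\tau_k(t))$. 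Crucially, because $\operatorname{curv}(M)\ge\kappa$, the monotonicity of comparison angles converts the latter limit into a supremum over $s,t$ in a countable dense set of $(0,1]$, so it is a Borel function of $x$ (each finite-stage angle depends continuously on $\sigma(x)$, and $\sigma$ is Borel). Hence the preimage of every open ball lies in the Borel $\sigma$-algebra of $M$.

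The main obstacle I anticipate is the last step: $T_pM$ is a completion and need not be separable, so it is essential to exploit the ball-generated $\sigma$-algebra (rather than the full Borel $\sigma$-algebra) and to reduce the angle $\sphericalangle_p(\vec{\sigma(x)},\xi)$ to a countable, monotone operation on finite-stage comparison angles. This reduction rests precisely on the angle-monotonicity available in spaces with lower curvature bound (Theorem \ref{thm:curvequiv}, item \ref{it:pointsonsides}), which both makes $\sphericalangle_p$ well defined on $\Gamma_p$ and turns the defining limit into a countable supremum. Once this is in place, the rest of the argument is a routine application of Kuratowski--Ryll-Nardzewski and continuity of basic metric operations.
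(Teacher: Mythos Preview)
Your approach is essentially the paper's: use Kuratowski--Ryll-Nardzewski in the Polish space of paths to select a geodesic $\sigma(x)$ from $p$ to $x$ measurably, then verify that $\gamma\mapsto\dot\gamma$ is measurable for the ball-generated $\sigma$-algebra on $T_pM$.

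There is, however, a real gap in the first step. You pass from ``closed graph in a Polish target'' directly to ``closed-valued measurable multifunction'', but this inference fails in general: for a multifunction $\Gamma$ with closed graph in $M\times C$, the set $\{x:\Gamma(x)\cap U\ne\emptyset\}$ is the projection onto $M$ of the Borel set $\mathrm{Graph}(\Gamma)\cap(M\times U)$, hence a priori only analytic, not Borel. Kuratowski--Ryll-Nardzewski therefore does not apply as stated, and a Jankov--von Neumann type argument would only deliver a universally measurable selector, not a Borel one. The paper does exactly the work you omit: it shows that for every open $U$ in the space $G_p$ of unit-time geodesics from $p$, the image $e_1(U)\subset M$ is Borel, by exhausting $U$ from inside by closed sets $A_\delta$ and exhibiting each $e_1(A_\delta)$ as a countable intersection of explicit open sets $M_{\e,\delta}$ (points reachable by a rectifiable path $\e$-close to $A_\delta$). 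This weak-measurability verification is the one substantive step in the selection half of the proof.

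Your second step is sound in spirit, with a minor inaccuracy: angle monotonicity is guaranteed for $\sphericalangle_p^{\kappa}$, not for $\sphericalangle_p^{0}$ when $\kappa<0$, so ``limit $=$ supremum'' is not literally available for the $0$-comparison angle. This is harmless, since the limit defining $\sphericalangle_p(\vec{\sigma(x)},\vec{\tau_k})$ exists and may be computed along any fixed sequence $s_n,t_n\downarrow 0$, and a pointwise limit of Borel functions of $x$ is Borel. The paper organises this part slightly differently---approximating $v$ by $\dot\gamma_m\in T'_pM$ and writing $\theta_p^{-1}(B(v,r))$ as countable unions and intersections of the open sets $\{\gamma\in G_p:d(\gamma(1/k),\gamma_m(1/k))<r/k\}$---but the two arguments are equivalent.
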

We report the proof of Lemma \ref{lem:logismeas} in the appendix. 
\begin{rem}
\label{rem:sketchofproof}
The proof of Lemma \ref{lem:logismeas} shows that, under these assumptions, we can first select a collection $(\gamma_x)_{x\in M}$ of geodesics $\gamma_x:[0,1]\to M$ connecting $p$ to $x$, such that the map $x\in M\mapsto \gamma_x\in G_p$ is Borel measurable when we equip $G_p\subset \Gamma_p$, the set of all geodesics issuing from $p$ and defined on $[0,1]$, with the uniform metric. Then, we show that the map $\gamma\in G_p\mapsto \dot\gamma\in T_pM$ is measurable when $T_pM$ is equipped with the $\sigma$-algebra generated by open balls. We finally define $\log_p$ as the composition of these two maps. 
\end{rem}

It follows from Lemma \ref{lem:logismeas} that we can choose a Borel-measurable logarithmic map at any point $p$ whenever the tangent cone $T_pM$ is separable since in this case the Borel $\sigma$-algebra on $T_pM$ coincides with the $\sigma$-algebra generated by open balls. This occurs for instance in the case where $M$ is a proper metric space as noted in~\cite{ohta2012barycenters}. It is also known to be the case for specific examples of non-proper spaces. For instance, if $\Omega$ denotes a Polish geodesic space with curvature lower bounded by $0$, the $2$-Wasserstein space $\mathcal W_2(\Omega)$ has a separable tangent cone at any point. This fact follows from~\citet[Definition 12.4.3]{AmbGigSav08} which characterises $T_p\mathcal W_2(\Omega)$ as a closed subset of a separable metric space.

 However, it should be noted that the measurability of $\log_p$ with respect to the $\sigma$-algebra generated by open balls on $T_pM$ is enough for the results we present next. Indeed, statements presented below require only the Borel-measurability of maps of the form
 \[x\mapsto\langle\log_p(x),u\rangle_p,\]
 for some fixed $p\in M$ and $u\in T_pM$, which follows from this weaker measurability of $\log_p$.
 
Note finally that the choice of a measurable log is in principle not unique. However, all results we'll mention can be shown to hold independently of its choice. 

Next is the first key result for the proof of Theorem \ref{thm:main}.
\begin{lem}
\label{lem:tpstarisflat}
Let $(M,d)$ be a Polish geodesic space with lower bounded curvature in the sense of Definition \ref{def:4pc}. Let $\mu\in \mathcal P_2(M)$ and suppose it admits a barycenter $x^*\in M$. Then for all $u\in T_{x^*}M$,
     \[\int_M\langle\log_{x^*}(x),u\rangle_{x^*}\,{\rm d}\mu(x)=0.\]
\end{lem}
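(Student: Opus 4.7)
The plan is to combine the first-order optimality of $x^*$ as a minimizer of $\mathcal V_\mu$ with the non-contracting behavior of $\log_{x^*}$ imposed by the lower curvature bound. Set $I(u):=\int_M\langle\log_{x^*}(x),u\rangle_{x^*}\,d\mu(x)$; by Cauchy--Schwarz and $\mu\in\mathcal P_2(M)$, $I$ is well defined and continuous on $T_{x^*}M$. Since $T'_{x^*}M$ is dense in $T_{x^*}M$ by Lemma~\ref{lem:ctpm}, it suffices to prove $I(\dot\sigma)=0$ for every nontrivial geodesic $\sigma:[0,\tau]\to M$ issuing from $x^*$.

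Fix such $\sigma$. I would apply Theorem~\ref{thm:curvequiv}, point~\ref{it:pointsonsides}, to the hinge at $x^*$ formed by $\sigma$ and the geodesic representing $\log_{x^*}(x)$, then Taylor-expand the $\kappa$-law of cosines in $M^2_\kappa$ at $t=0$, to obtain an estimate of the form
\[d^2(\sigma(t),x)\le d^2(x^*,x)-2t\,\langle\log_{x^*}(x),\dot\sigma\rangle_{x^*}+t^2|\dot\sigma|^2+t\,\varepsilon(t)\,\psi(x),\]
with $\varepsilon(t)\to 0$ as $t\to 0^+$ and $\psi\in L^1(\mu)$ (the correction vanishes identically when $\kappa=0$, as the $\kappa$-comparison then reduces to the statement that $\log_{x^*}$ is $1$-co-Lipschitz in non-negatively curved spaces). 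Integrating against $\mu$, using $\mathcal V_\mu(\sigma(t))\ge \mathcal V_\mu^*$, dividing by $2t$, and sending $t\to 0^+$, I obtain $I(\dot\sigma)\le 0$.

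The reverse inequality $I(\dot\sigma)\ge 0$ is the main obstacle, since first-order optimality alone cannot deliver more than a sign. To upgrade $\le$ to $=$, I would invoke the structural results of~\cite{ohta2012barycenters,Yok12,GouParRigStr19,Gou20} on tangent cones at barycenters: at $x^*$ the cone $T_{x^*}M$ admits a canonical isometric embedding into a Hilbert space under which its image is a convex cone not contained in any closed half-space, so that the positively homogeneous functional $I$ extends to a continuous linear form on that Hilbert space; the condition $I\le 0$ on a cone generating the ambient space then forces $I\equiv 0$. This \emph{flatness} of $T_{x^*}M$ at a barycenter---alluded to in the label of the lemma---is the non-trivial ingredient, and establishing it is precisely the content of the cited references.
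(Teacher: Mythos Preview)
The paper itself does not give a self-contained argument for this lemma: it simply records that the statement follows by combining \citet[Theorem~7]{GouParRigStr19} and \citet[Corollary~2]{Gou20}. Your proposal therefore takes essentially the same route---derive the one-sided bound $I(\dot\sigma)\le 0$ from first-order optimality of the barycenter, and then invoke the cited structural results on the tangent cone at $x^*$ to upgrade this to equality. In that sense there is nothing to compare; you have merely unpacked a bit more of what lies behind the citation.

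Two remarks on the details of your sketch, however. First, Theorem~\ref{thm:curvequiv}\ref{it:pointsonsides} as written gives a \emph{lower} bound on $d(\sigma(t),x)$ in terms of the comparison triangle, which goes the wrong way for your displayed inequality; the upper bound you need is the \emph{hinge} version of the comparison (open a hinge in $M^2_\kappa$ with the Alexandrov angle at $x^*$, not the comparison angle), which is equivalent to CBB but is not the formulation stated in the paper. Second, your summary of the structural result---that the full cone $T_{x^*}M$ embeds isometrically into a Hilbert space as a cone not contained in any half-space---is not quite what the references establish, and is not true in general (a CBB$(0)$ cone need not embed isometrically into a Hilbert space). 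The actual mechanism is more delicate: roughly, the support of $(\log_{x^*})_{\#}\mu$ lies in a Hilbertian subcone and the tangent cone splits accordingly, which is enough to force $I\equiv 0$ on all of $T_{x^*}M$. Since you explicitly attribute this step to the literature, this is an imprecision in your paraphrase rather than a gap in the argument.
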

Lemma \ref{lem:tpstarisflat} follows by combining~\citet[Theorem 7]{GouParRigStr19} and~\citet[Corollary 2]{Gou20}.

\subsection{Differential and gradient of semi-concave functions}
 Suppose $(M,d)$ is a geodesic space with lower bounded curvature in the sense of Definition \ref{def:4pc}. 

A function $f:M\to \R$ is called (geodesically) $\alpha$-concave, for some $\alpha\in\R$, if for any geodesic $\gamma:[0,1]\to M$, the map
\[t\in[0,1]\mapsto f(\gamma(t))-\frac{\alpha}{2}d^2(\gamma(0),\gamma(t)),\]
is concave. Hence, $f$ is $\alpha$-concave iff $(-f)$ is $(-\alpha)$-convex. 

A function $f:M\to \R$ is called locally Lipschitz at $p$ if there exists a constant $\lambda>0$ such that inequality \[|f(x)-f(y)|\le \lambda d(x,y)\] 
holds for all $x,y$ in some neighborhood of $p$. We denote ${\rm Lip}_p(f)$ the smallest such constant. 

Let $f:M\to \R$ be an $\alpha$-concave function, locally Lipschitz at $p\in M$. Let $d_p f:T'_pM\to \R$ be defined by
\begin{equation}
\label{defi:dpf}
d_pf(\dot\gamma):=\lim_{t\to 0}\frac{f(\gamma(t))-f(p)}{t}.
\end{equation}
\begin{lem}
\label{lem:propdiff}
Let $\alpha\in \R$ and $f:M\to \R$ be an $\alpha$-concave function, locally Lipschitz at $p\in M$. Then the limit in \eqref{defi:dpf} is well defined. For any geodesic $\gamma:[0,\tau]\to M$ issuing from $p$, this limit can be written
\begin{equation}
\label{eq:diffasinf}
\lim_{t\to 0}\frac{f(\gamma(t))-f(p)}{t}=\sup_{t\in(0,\tau]}\left\{\frac{f(\gamma(t))-f(p)}{t}-\frac{\alpha t}{2}|\gamma'|^2\right\},
\end{equation}
and does not depend on the representative $\gamma$ of $\dot\gamma$. Furthermore, the map $d_pf:T'_pM\to \R$ admits a unique ${\rm Lip}_p(f)$-Lipschitz extension to $T_pM$, which we also denote $d_pf$, and call the differential of $f$ at $p$. Finally, $d_pf:T_pM\to \R$ is positively homogeneous, i.e., satisfies, for all $\lambda\ge 0$ and all $v\in T_pM$, 
\[d_pf(\lambda v)=\lambda d_pf(v).\]
\end{lem}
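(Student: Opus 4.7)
The plan is to establish the four claims in order by exploiting successively the $\alpha$-concavity of $f$, its local Lipschitzness at $p$, and the rescaling action on geodesics. Fixing a constant-speed geodesic $\gamma:[0,\tau]\to M$ from $p$, the $\alpha$-concavity of $f$ amounts exactly to concavity on $[0,\tau]$ of the map $h(t):=f(\gamma(t))-\tfrac{\alpha}{2}t^2|\gamma'|^2$ (using that $d(\gamma(0),\gamma(t))=t|\gamma'|$). A standard property of concave functions of one real variable then gives that $t\mapsto (h(t)-h(0))/t$ is non-increasing on $(0,\tau]$; after rearrangement, $t\mapsto \tfrac{f(\gamma(t))-f(p)}{t}-\tfrac{\alpha t}{2}|\gamma'|^2$ is non-increasing, so its limit as $t\to 0^+$ exists and coincides with its supremum, which is exactly \eqref{eq:diffasinf}. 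Local Lipschitzness at $p$ bounds the difference quotient by ${\rm Lip}_p(f)\cdot|\gamma'|$ for $t$ small, ensuring finiteness of the limit.

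Next I would derive simultaneously the Lipschitz estimate and the independence of representative. For any $\dot\gamma,\dot\sigma\in T'_pM$ with representatives $\gamma,\sigma\in\Gamma_p$, the local Lipschitz hypothesis yields, for $t$ small enough,
\[\left|\frac{f(\gamma(t))-f(p)}{t}-\frac{f(\sigma(t))-f(p)}{t}\right|\le {\rm Lip}_p(f)\cdot\frac{d(\gamma(t),\sigma(t))}{t}.\]
Passing to the limit using the definition of $|\cdot-\cdot|_p$ gives $|d_pf(\dot\gamma)-d_pf(\dot\sigma)|\le {\rm Lip}_p(f)\cdot|\gamma-\sigma|_p$. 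Applying this with $|\gamma-\sigma|_p=0$ shows $d_pf$ is well-defined on equivalence classes, and the full inequality expresses that $d_pf$ is ${\rm Lip}_p(f)$-Lipschitz on $T'_pM$. Since $T'_pM$ is dense in its completion $T_pM$ by Lemma \ref{lem:ctpm} and $\R$ is complete, the standard extension theorem for Lipschitz maps produces a unique ${\rm Lip}_p(f)$-Lipschitz extension $d_pf:T_pM\to\R$.

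Positive homogeneity on $T'_pM$ will follow from the reparametrization $\gamma_\lambda(s):=\gamma(\lambda s)$ on $[0,\tau/\lambda]$ for $\lambda>0$: this is again a geodesic from $p$, with speed $\lambda|\gamma'|$, and under the identification of Lemma \ref{lem:ctpm} it represents $\lambda\dot\gamma$; a direct change of variable in \eqref{defi:dpf} then gives $d_pf(\lambda\dot\gamma)=\lambda\, d_pf(\dot\gamma)$. The identity extends to all of $T_pM$ by density and continuity of both sides, and the case $\lambda=0$ reduces to $d_pf(0_p)=\lim_{\lambda\to 0^+}\lambda\, d_pf(v)=0$ for any $v$, which holds because $\lambda v\to 0_p$ and $d_pf$ is Lipschitz. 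I do not anticipate a genuinely hard step; the most delicate point is to pass correctly from $T'_pM$ to $T_pM$, which amounts to carefully invoking Lemma \ref{lem:ctpm} to treat $T'_pM$ as a dense subspace of $T_pM$ so that both the Lipschitz extension and the density argument for homogeneity are rigorous.
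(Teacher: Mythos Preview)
Your proposal is correct and follows essentially the same route as the paper: concavity of $t\mapsto f(\gamma(t))-\tfrac{\alpha}{2}t^2|\gamma'|^2$ yields \eqref{eq:diffasinf}, the local Lipschitz estimate gives both well-definedness on $T'_pM$ and the ${\rm Lip}_p(f)$-Lipschitz extension to $T_pM$, and reparametrization of geodesics gives positive homogeneity. The only cosmetic difference is that the paper first reparametrizes $\gamma$ to $[0,1]$ (since the definition of $\alpha$-concavity is stated for geodesics on $[0,1]$) before invoking concavity, whereas you apply it directly on $[0,\tau]$; this is harmless since concavity is preserved under the affine change of variable $t=\tau s$.
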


We prove Lemma \ref{lem:propdiff} in the appendix.

Given an $\alpha$-concave function $f:M\to \R$, we call gradient of $f$ at $p$ any element $g\in T_pM$ such that, for all $v\in T_pM$,
\[d_pf(v)\le \langle g,v\rangle_p\quad\mbox{and}\quad d_pf(g)=\|g\|^2_p.\]

The existence of gradients for $\alpha$-concave functions, in spaces with lower bounded curvature, is the second essential result for the proof of Theorem \ref{thm:main}.
\begin{lem}[\citealp{AleKapPet19}, Theorem 11.4.2]
\label{lem:grad}
Let $(M,d)$ be a geodesic space with lower bounded curvature. Let $\alpha\in \R$ and $f:M\to \R$ be an $\alpha$-concave function, locally Lipschitz at $p$. Then there exists a unique gradient of $f$ at $p$, denoted $\nabla f(p)$.
\end{lem}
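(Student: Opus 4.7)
The plan is to identify the gradient $\nabla f(p)$ as the element $g := s \xi^*$, where $s := \sup_{\xi \in \Sigma_p} \Phi(\xi)$ and $\xi^*$ realizes this supremum (taking $g := 0_p$ in the degenerate case $s \le 0$). Here $\Phi := d_pf$ denotes the Lipschitz, positively homogeneous differential of Lemma \ref{lem:propdiff}. A central additional property I would invoke, standard in Alexandrov geometry under the lower curvature bound and a consequence of the $\alpha$-concavity of $f$ together with the first variation formula, is that $\Phi$ is concave along geodesics of $T_pM$. Moreover, the cone structure $T_pM = \mathrm{cone}(\Sigma_p)$ combined with $\curv(\Sigma_p) \ge 1$ yields $\curv(T_pM) \ge 0$ and makes triangles in $T_pM$ with one vertex at the tip $0_p$ isometric to planar Euclidean triangles.

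If $s \le 0$, then $\Phi \le 0$ on $T_pM$ by positive homogeneity, and $g = 0_p$ trivially satisfies both gradient conditions. Suppose now $s > 0$ and take a maximizing sequence $(\xi_n) \subset \Sigma_p$ with $\Phi(\xi_n) \to s$. Letting $\theta_{nm} := \sphericalangle(\xi_n, \xi_m)$, the cone formula gives that the $T_pM$-midpoint of $\xi_n$ and $\xi_m$ has the form $\cos(\theta_{nm}/2) \nu_{nm}$ for some $\nu_{nm} \in \Sigma_p$. By positive homogeneity and the definition of $s$, $\Phi$ at this midpoint is at most $\cos(\theta_{nm}/2) s$; by concavity of $\Phi$ on $T_pM$, it is at least $\tfrac{1}{2}(\Phi(\xi_n) + \Phi(\xi_m))$. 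If $(\xi_n)$ were not Cauchy, one could extract a subsequence with $\theta_{nm} \ge \delta_0 > 0$, yielding $\cos(\delta_0/2)\cdot s \ge s$ in the limit, a contradiction since $s > 0$. Completeness of $\Sigma_p$ then provides a limit $\xi^* \in \Sigma_p$ with $\Phi(\xi^*) = s$.

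With $g := s\xi^*$, the identity $\Phi(g) = s\Phi(\xi^*) = s^2 = \|g\|_p^2$ is immediate from homogeneity. For the inequality $\Phi(v) \le \langle g, v\rangle_p$, fix any $\zeta \in \Sigma_p$, set $\theta := \sphericalangle(\zeta, \xi^*)$, and let $\eta_t$ denote the point at parameter $t \in (0,1)$ on the geodesic in $T_pM$ from $\xi^*$ to $\zeta$. The Euclidean isometry of the triangle $\{0_p, \xi^*, \zeta\}$ gives $\|\eta_t\|_p^2 = 1 - 2t(1-t)(1 - \cos\theta)$. Combining $\Phi(\eta_t) \ge (1-t)s + t\Phi(\zeta)$ (concavity) with $\Phi(\eta_t) \le \|\eta_t\|_p \cdot s$ (homogeneity and definition of $s$) and letting $t \to 0^+$ produces $\Phi(\zeta) \le s\cos\theta = \langle g, \zeta\rangle_p$, which extends to all $v \in T_pM$ by homogeneity. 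Uniqueness follows from a Cauchy--Schwarz argument in $T_pM$: if $g'$ were another gradient, then $\|g'\|_p^2 = \Phi(g') \le \langle g, g'\rangle_p \le \|g\|_p\|g'\|_p$; by symmetry $\|g\|_p = \|g'\|_p$, so equality in Cauchy--Schwarz gives $\sphericalangle(g, g') = 0$ and hence $g = g'$.

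The main obstacle is the attainment of the supremum $s$ in the second step, since $\Sigma_p$ need not be compact in infinite-dimensional settings such as the Wasserstein space, precluding any soft compactness argument. The resolution rests on two ingredients from Alexandrov geometry: concavity of $\Phi$ along geodesics of $T_pM$, and the exact Euclidean midpoint formula available in the cone $T_pM$. Establishing concavity of $\Phi$ is itself nontrivial and is where the strength of the lower curvature bound enters most substantively; it is typically derived via the first variation formula applied along pairs of geodesics issuing from $p$ and is one of the core results underlying Chapter 11 of Alexander--Kapovitch--Petrunin.
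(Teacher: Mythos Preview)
Your overall architecture---maximize $d_pf$ on the unit sphere of $T_pM$, show the maximizing sequence is Cauchy via a midpoint inequality, define $g$ as the scaled maximizer, derive $d_pf(v)\le\langle g,v\rangle_p$ by a first-order expansion, and conclude uniqueness---is exactly the paper's strategy. The divergence is in how the crucial midpoint inequality is obtained. You invoke two structural facts about $T_pM$ as black boxes: that $T_pM$ is a geodesic space (so that midpoints and geodesics between $\xi_n,\xi_m$ exist) and that $d_pf$ is concave along those geodesics. Both statements presuppose, in effect, that $\Sigma_p$ is a length space with $\curv(\Sigma_p)\ge 1$, which is known in the locally compact (finite-dimensional) case but is \emph{not} established in the generality of this paper, where $M$ is merely Polish (think of the Wasserstein space). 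Without that, ``the $T_pM$-midpoint of $\xi_n$ and $\xi_m$'' need not exist, and your Cauchy argument and gradient inequality both break down.

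The paper sidesteps this entirely: its Lemma~\ref{lem:dpfisconv} plays the role of your midpoint-concavity step, but it is proved using midpoints in $M$ itself (which \emph{is} geodesic by hypothesis) rather than in $T_pM$. Concretely, one takes geodesics $\gamma,\sigma\in\Gamma_p$, forms the midpoint $m_t$ of $\gamma(t)$ and $\sigma(t)$ in $M$, uses $\alpha$-concavity of $f$ along the geodesic $\gamma(t)\to\sigma(t)$, and then controls $d(p,m_t)/t$ via the Lang--Schroeder first-variation identity (Lemma~\ref{lem:ls}). This yields precisely
\[
\sup_{\|w\|_p=1}d_pf(w)\ge \frac{d_pf(u)+d_pf(v)}{\sqrt{\|u\|_p^2+2\langle u,v\rangle_p+\|v\|_p^2}},
\]
which is all that is needed for both the Cauchy estimate and the gradient inequality (take $v=\e w$ and let $\e\downarrow0$). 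So your intuition is right---the heart of the matter is a ``concavity-type'' estimate---but the route through geodesics of $T_pM$ is not available in this generality; the correct route is through midpoints in $M$.
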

We include the proof of Lemma \ref{lem:grad} in the appendix.

\section{Proof of Theorem \ref{thm:main}}
\label{sec:theproof}

For all $x\in M$, let $\gamma_x:[0,1]\to M$ be a geodesic such that $\gamma_x(0)=x^*$ and $\gamma_x(1)=x$. Suppose \[(\gamma_x)_{x\in M}\] 
is chosen as indicated in Remark \ref{rem:sketchofproof}, for $p=x^*$, and denote $\log_{x^*}:M\to T_{x^*}M$ the corresponding logarithmic map at $x^*$. By $\alpha$-convexity of $f$, we deduce that, for all $t\in(0,1]$ and all $x\in M$,
\[f(\gamma_x(t))\le (1-t)f(x^*)+tf(x)-\frac{\alpha}{2}t(1-t)d^2(x^*,x).\]
Rearranging terms, we get
\begin{align*}
f(x^*)&\le f(x)-\frac{f(\gamma_x(t))-f(x^*)}{t}-\frac{\alpha}{2}(1-t)d^2(x^*,x)\\
&=f(x)+\frac{(-f)(\gamma_x(t))-(-f)(x^*)}{t}-\frac{\alpha}{2}(1-t)d^2(x^*,x).
\end{align*}
Notice that $(-f):M\to \R$ is $(-\alpha)$-concave. Hence, taking the limit $t\to 0$, it follows by definition of the differential, and the fact that \[\dot\gamma_x=\log_{x^*}(x),\] 
that
\[f(x^*)\le f(x)+d_{x^*}(-f)(\log_{x^*}(x))-\frac{\alpha}{2}d^2(x^*,x).\]
Now by Lemma \ref{lem:grad}, and the definition of $\nabla(-f)(x^*)$, we deduce that
\[f(x^*)\le f(x)+\langle\log_{x^*}(x),\nabla(-f)(x^*)\rangle_{x^*}-\frac{\alpha}{2}d^2(x^*,x).\]
The result follows by integrating both sides with respect to $\mu$ and using Lemma \ref{lem:tpstarisflat}.

\appendix
\section{Appendix}
\label{sec:appendix}

\subsection*{Proof of Lemma \ref{lem:ctpm}}
Direct computations reveal that, for all $\gamma, \sigma\in \Gamma_p$, 
\[\frac{d^2(\gamma(t),\sigma(t))}{t^2}=|\gamma'|^2+|\sigma'|^2-2|\gamma'||\sigma'|\cos\sphericalangle^0_p(\gamma(t), \sigma(t)),\]
provided $t>0$ is small enough that $\gamma(t)$ and $\sigma(t)$ are defined.
Taking the limit $t\to 0$, it follows by definition of angles that
\begin{align}
|\gamma-\sigma|^2_p&=|\gamma'|^2+|\sigma'|^2-2|\gamma'||\sigma'|\cos\sphericalangle_p(\gamma, \sigma)
\label{lem:ctpm:e1}\\
&=(|\gamma'|-|\sigma'|)^2+2|\gamma'||\sigma'|(1-\cos\sphericalangle_p(\gamma, \sigma)).
\label{lem:ctpm:e2}
\end{align}
Identity \eqref{lem:ctpm:e2} shows that $|\gamma-\sigma|_p$ is indeed well defined and that $\dot\gamma=\dot\sigma$ iff $|\gamma'|=|\sigma'|$ and $\vec{\gamma}=\vec{\sigma}$. Hence, the map \eqref{eq:themap} is defined without ambiguity, as it doesn't depend on particular representatives, and is injective. In addition, expression \eqref{lem:ctpm:e1} translates precisely as
\[|\dot\gamma-\dot\sigma|_p=d_c([\vec{\gamma},|\gamma'|],[\vec{\sigma},|\sigma'|]),\] 
where $d_c$ denotes the metric in ${\rm cone}(\Sigma'_p)$. This proves that the map \eqref{eq:themap} is distance preserving. Finally, this map is surjective since, for any $s>0$ and any $\gamma:[0,\tau]\to M$ in $\Gamma_p$, $[\vec{\gamma},s]\in {\rm cone}(\Sigma'_p)$ is the image of $\dot\gamma_{\alpha}$ where $\gamma_{\alpha}:[0,\tau/\alpha]\to M$ is defined by $\gamma_{\alpha}(t)=\gamma(\alpha t)$ with $\alpha:=|\gamma'|/s$. To show that the completion of $T'_pM$ is isometric to $T_pM$ it remains to observe that, more generally, the cone over the completion of a metric space is isometric to the completion of the cone over that space. 

To prove this statement, consider a metric space $(\Omega,d)$ and denote $(\bar{\Omega},\bar{d})$ its completion. We denote $\bar x_n$ the equivalence class of Cauchy sequence $x_n$ for the equivalence relation  $\lim_n d(x_n,y_n)=0$ and understand $\bar d$ as 
\[\bar{d}(\bar{x}_n,\bar{y}_n)=\lim_n d(x_n,y_n).\]
Writing 
\[d^2_c([p,s],[q,t])=(s-t)^2+2st(1-\cos d(p,q)),\]
we see that a sequence $[p_n,s_n]$ is a Cauchy sequence in ${\rm cone}(\Omega)$ iff ($s_n\to 0$) or ($s_n\to s_{\infty}>0$ and $p_n$ is a Cauchy sequence in $\Omega$).
As a result, the map $\phi:{\rm cone}(\bar\Omega)\to \bar{{\rm cone}}(\Omega)$ defined by
\[\phi([\bar p_n,s]):=\left\{\begin{array}{cc}
\overline{[p_n,s]}&\mbox{ if }s>0,\\
     0&\mbox{ if }s=0,
\end{array}\right.\]
is well defined and distance preserving when both $\bar\Omega$ and $\bar{{\rm cone}}(\Omega)$ are equipped with the completion metric. One checks finally that it is invertible with inverse given by $\phi^{-1}( \overline{[p_{n},s_{n}]})=0$ if $s_n\to 0$ and $\phi^{-1}( \overline{[p_{n},s_{n}]})=[\bar{p}_n,s_{\infty}]$ if $s_{n}\to s_{\infty}>0$.

\subsection*{Proof of Lemma \ref{lem:logismeas}}

We simply report and detail the proofs of Lemmas 3.3 and 4.2 in~\citet{ohta2012barycenters} emphasizing that they actually do not require the metric space $M$ to be proper, an assumption imposed in~\citet{ohta2012barycenters} for other reasons.

Suppose that $(M,d)$ is a Polish geodesic space and has lower bounded curvature. Introduce the set $G_p\subset \Gamma_p$ of all geodesics $\gamma:[0,1]\to M$ issuing from $p$. Equipped with the supremum metric 
\[d_{\infty}(\gamma,\sigma):=\sup_{t\in[0,1]}d(\gamma(t),\sigma(t)),\]
$G_p$ is a Polish metric space since it is closed in the Polish metric space $(G,d_{\infty})$ of all geodesics $\gamma:[0,1]\to M$ equipped with the supremum metric~\cite[Proposition 1.2.1]{GigPas20}.  

For $t\in[0,1]$, denote $e_t:G_p\to M$ the evaluation map defined by $e_t(\gamma):=\gamma(t)$. This evaluation map is (Lipschitz) continuous. Hence, for all $x\in M$, the set $e^{-1}_1(x)\subset G_p$ is closed and non-empty. Furthermore, for any open set $U\subset G_p$, the set
\[\{x\in M: e^{-1}_1(x)\cap U\ne\emptyset\}=e_1(U),\]
is a Borel set. 

Indeed, fix a non-empty open set $U\subset G_p$. For $\delta>0$, denote 
\[A_\delta:=G_p\setminus\{\gamma\in G_p: d_{\infty}(\gamma,G_p\setminus U)<\delta\}.\]
The set $A_{\delta}$ is closed in $G_p$, satisfies $A_\delta\subset A_{\delta'}$ iff $\delta'\le \delta$ and is such that $\cup_{\delta>0}A_{\delta}=U$. Given $\e,\delta>0$, consider the set $M_{\e,\delta}\subset M$ of all points $x$ for which there exists a rectifiable path $\sigma:[0,1]\to M$ satisfying $\sigma(0)=p$, $\sigma(1)=x$ and
\[d_{\infty}(\sigma,A_{\delta})<\e.\]
The set $M_{\e,\delta}$ is open in $M$ and satisfies $M_{\e,\delta}\subset M_{\e',\delta}$ iff $\e\le\e'$. Hence, the set $e_1(A_{\delta})=\cap_{\e>0}M_{\e,\delta}$ is a Borel set in $M$ since the intersection can be restricted to any countable sequence $\e_n\downarrow 0$ by monotonicity. As a result, the set $e_1(U)=\cup_{\delta>0}e_1(A_{\delta})$ is a Borel set in $M$ as well since, again, the reunion can be restricted to any countable sequence $\delta_n\downarrow 0$ by monotonicity.

Now, by application of the Kuratowski and Ryll-Nardzewski measurable selection theorem~\citep[Theorem 6.9.3]{Bog07}, there exists a measurable function $g:M\to G_p$ such that, for all $x\in M$, $g(x)\in e^{-1}_1(x)$. We denote \[\gamma_x:=g(x).\]

It remains to show that the map $\theta_p:\gamma\in G_p\mapsto\dot\gamma\in T_pM$ is measurable when $T_pM$ is equipped with the $\sigma$-algebra generated by open balls. To prove this fact, fix $v\in T_pM$ and $r>0$. By density of $T'_pM$ in $T_pM$, there exists $\gamma_n\in G_p$ such that $\dot\gamma_n\to v$. Therefore,
\begin{align*}
    \theta^{-1}_p(B(v,r))&=\{\gamma\in G_p:\|\dot\gamma-v\|_p<r\}\\
    &= \bigcup_{n\ge 1}\bigcap_{m\ge n}\{\gamma\in G_p:\|\dot\gamma-\dot\gamma_m\|_p<r\}\\
    &= \bigcup_{n\ge 1}\bigcap_{m\ge n}\{\gamma\in G_p:\lim_{t\to 0}t^{-1}d(\gamma(t),\gamma_m(t))<r\}\\
    &=\bigcup_{n\ge 1}\bigcap_{m\ge n}\bigcup_{\ell\ge 1}\bigcap_{k\ge \ell}\{\gamma\in G_p:d(\gamma(1/k),\gamma_m(1/k))<r/k\}.\\
\end{align*}
But since, for fixed $k,m\ge 1$, the set 
$\{\gamma\in G_p:d(\gamma(1/k),\gamma_m(1/k))<r/k\}$
is open in $(G_p,d_{\infty})$, we see that $\theta^{-1}_p(B(v,r))$ is a Borel subset of $(G_p,d_{\infty})$ which completes the proof.

\subsection*{Proof of Lemma \ref{lem:propdiff}}

Let $\gamma:[0,\tau]\to M$ be any geodesic issuing from $p$ and let $\sigma:[0,1]\to M$ be the reparametrization of $\gamma$ defined by $\sigma(t)=\gamma(\tau t)$. Then, for all $t\in (0,1]$,
\begin{align}
    \frac{f(\gamma(\tau t))-f(p)}{\tau t}&=\frac{1}{\tau}\left\{\frac{f(\sigma(t))-f(p)}{t}-\frac{\alpha t}{2}|\sigma'|^2\right\}+ \frac{\alpha t}{2\tau}|\sigma'|^2
    \nonumber\\
    &=\frac{1}{\tau}\left\{\frac{g(t)-g(0)}{t}\right\}+ \frac{\alpha t}{2\tau}|\sigma'|^2,
    \label{lem:propdiff:e1}
\end{align}
where $g:[0,1]\to \R$ is defined by
\[g(t)=f(\sigma(t))-\frac{\alpha t^2}{2}|\sigma'|^2=f(\sigma(t))-\frac{\alpha}{2}d^2(p,\sigma(t)).\]
The $\alpha$-concavity of $f$ implies that $g$
is concave on $[0,1]$. Hence, the right derivative of $g$ at $0$ is well defined and satisfies 
\begin{align}
\lim_{t\downarrow 0}\frac{g(t)-g(0)}{t}&=\sup_{t\in(0,1]}\frac{g(t)-g(0)}{t}
\nonumber\\
&= \sup_{t\in(0,1]}\left\{\frac{f(\sigma(t))-f(p)}{t}-\frac{\alpha t }{2}|\sigma'|^2\right\}.
\label{lem:propdiff:e2}
\end{align}
Hence, combining \eqref{lem:propdiff:e1}, \eqref{lem:propdiff:e2} and noticing that $|\sigma'|=\tau|\gamma'|$, we obtain
\begin{align*}
    \lim_{t\downarrow 0}\frac{f(\gamma(t))-f(p)}{t}&=\lim_{t\downarrow 0}\frac{f(\gamma(\tau t))-f(p)}{\tau t}\\
    &= \frac{1}{\tau}\sup_{t\in(0,1]}\left\{\frac{f(\sigma(t))-f(p)}{t}-\frac{\alpha t }{2}|\sigma'|^2\right\}\\
    &= \frac{1}{\tau}\sup_{t\in(0,1]}\left\{\frac{f(\gamma(\tau t))-f(p)}{t}-\frac{\alpha \tau^2t}{2}|\gamma'|^2\right\}\\
    &= \sup_{t\in(0,\tau]}\left\{\frac{f(\gamma(t))-f(p)}{t}-\frac{\alpha t}{2}|\gamma'|^2\right\},
\end{align*}
which proves \eqref{eq:diffasinf}.
Now, for any $\dot\gamma,\dot\sigma\in T'_pM$, the local Lipschitz property of $f$ implies that
\begin{align*}
|d_pf(\dot\gamma)-d_pf(\dot\sigma)|&=\lim_{t\to 0}\frac{|f(\gamma(t))-f(\sigma(t))|}{t}\\ 
&\le {\rm Lip}_p(f)\lim_{t\to 0}\frac{d(\gamma(t),\sigma(t))}{t}\\
&={\rm Lip}_p(f)\|\dot\gamma-\dot\sigma\|_p.
\end{align*}
On the one hand, this inequality shows that the limit on the right hand side of \eqref{defi:dpf} is independent of the chosen representative $\gamma$ of $\dot\gamma$. On the other hand, by density of $T'_pM$ in $T_pM$, it implies that $d_pf$ admits a unique ${\rm Lip}_p(f)$-Lipschitz extension to $T_pM$.  

Finally, to prove $d_pf$ is positively homogeneous on $T_pM$ it is enough to show it on $T'_pM$ and conclude by continuity. But, for any geodesic $\gamma:[0,\tau]\to M$ and any $\lambda>0$, the geodesic $\sigma:[0,\lambda\tau]\to M$ defined by $\sigma(t)=\gamma(t/\lambda)$ satisfies $\dot\sigma=\lambda\dot\gamma$ and we see that 
\[d_pf(\dot\sigma)=\lim_{t\to 0}\frac{f(\sigma(t))-f(p)}{t}=\lambda\lim_{t\to 0}\frac{f(\gamma(t/\lambda))-f(p)}{t/\lambda}=\lambda d_pf(\dot\gamma),\]
which completes the proof.

\subsection*{Proof of Lemma \ref{lem:grad}}
The proof follows the lines devised in~\citet{AleKapPet19} with minor modifications. In particular, we use the following result, due to~\citet{LanSch97}.  

\begin{lem}[\citealp{LanSch97}, Lemma A.4]
\label{lem:ls}
Let $(M,d)$ be geodesic with lower bounded curvature, in the sense of Definition \ref{def:4pc}. Let $p\in M$ and $\gamma,\sigma:[0,1]\to M$ two geodesics issuing from $p$. For all $t\in(0,1]$, let $\delta_t:[0,1]\to M$ be a geodesic connecting $\gamma(t)$ to $\sigma(t)$. Introducing the midpoint $m_t=\delta_t(1/2)$ of $\gamma(t)$ and $\sigma(t)$, we have
\[4\lim_{t\downarrow 0}\frac{d^2(p,m_t)}{t^2}=\|\dot\gamma\|^2_p+2\langle\dot\gamma,\dot\sigma\rangle_p+\|\dot\sigma\|^2_p.\]
\end{lem}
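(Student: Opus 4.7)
The plan is to first observe that, by the cone-metric structure recorded in Lemma~\ref{lem:ctpm}, the right-hand side equals $4\|\mu\|_p^2$, where $\mu\in T_pM$ denotes the midpoint of $\dot\gamma$ and $\dot\sigma$ in the tangent cone (which exists since $T_pM$, being a complete Euclidean cone over a length space, is itself a geodesic space). Indeed, flattening the $2$-dimensional sector between $\dot\gamma$ and $\dot\sigma$ into a Euclidean plane and applying the classical parallelogram law gives
\[4\|\mu\|_p^2 = 2\|\dot\gamma\|_p^2 + 2\|\dot\sigma\|_p^2 - \|\dot\gamma-\dot\sigma\|_p^2 = \|\dot\gamma\|_p^2 + 2\langle\dot\gamma,\dot\sigma\rangle_p + \|\dot\sigma\|_p^2.\]
The statement therefore amounts to saying that, after rescaling by $t$, the midpoints $m_t$ realize this tangent-cone midpoint as $t\downarrow 0$.

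For the lower bound, I apply the Alexandrov-type comparison of point \ref{it:pointsonsides} of Theorem~\ref{thm:curvequiv} to the triangle $\{\gamma(t),p,\sigma(t)\}$ (for $t>0$ small enough that $\mathrm{peri}\{p,\gamma(t),\sigma(t)\}<2D_\kappa$) with the two geodesics from $\gamma(t)$ to $p$ and from $\gamma(t)$ to $\sigma(t)$. This yields $d(p,m_t)\ge d_\kappa(\bar p,\bar m_t)$, where $\bar m_t$ denotes the midpoint of $[\bar\gamma_t,\bar\sigma_t]$ in a $\kappa$-comparison triangle $\{\bar p,\bar\gamma_t,\bar\sigma_t\}\subset M^2_\kappa$. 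Using that $M^2_\kappa$ is infinitesimally Euclidean, a Taylor expansion of the $\kappa$-law of cosines in $M^2_\kappa$ gives
\[4 d_\kappa^2(\bar p,\bar m_t) = 2 d^2(p,\gamma(t)) + 2 d^2(p,\sigma(t)) - d^2(\gamma(t),\sigma(t)) + o(t^2),\]
as $t\downarrow 0$. Dividing by $t^2$, invoking $d^2(\gamma(t),\sigma(t))/t^2\to\|\dot\gamma-\dot\sigma\|_p^2$ from identity \eqref{lem:ctpm:e1}, and passing to the $\liminf$ delivers
\[\liminf_{t\downarrow 0}\frac{4 d^2(p,m_t)}{t^2}\ge 2\|\dot\gamma\|_p^2+2\|\dot\sigma\|_p^2-\|\dot\gamma-\dot\sigma\|_p^2 = \|\dot\gamma\|_p^2+2\langle\dot\gamma,\dot\sigma\rangle_p+\|\dot\sigma\|_p^2.\]

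For the matching upper bound, I would realize the tangent-cone midpoint $\mu$ by approximating it through genuine geodesics issuing from $p$: by the density of $T'_pM$ in $T_pM$ (Lemma~\ref{lem:ctpm}), there exist $\rho_n\in\Gamma_p$ with $\dot\rho_n\to\mu$ in $T_pM$. The triangle inequality then gives
\[d(p,m_t)\le d(p,\rho_n(t))+d(\rho_n(t),m_t) = t\|\dot\rho_n\|_p + d(\rho_n(t),m_t),\]
and the task reduces to bounding $\limsup_{t\downarrow 0} d(\rho_n(t),m_t)/t$ by the cone-distance $\|\dot\rho_n-\mu\|_p$. Combining this with $\dot\rho_n\to\mu$ yields $\limsup_{t\downarrow 0} d(p,m_t)/t\le \|\mu\|_p$, hence $\limsup 4 d^2(p,m_t)/t^2\le \|\dot\gamma\|_p^2+2\langle\dot\gamma,\dot\sigma\rangle_p+\|\dot\sigma\|_p^2$, completing the proof.

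The main obstacle is precisely this upper bound, as it requires showing that the midpoints $m_t$, when viewed in the rescaled spaces $(M,d/t,p)$, truly correspond to the tangent-cone midpoint $\mu$. This can be addressed either via the Gromov--Hausdorff convergence of $(M,d/t,p)$ to $T_pM$ (a standard feature of Alexandrov spaces with lower bounded curvature) applied to $d(\rho_n(t),m_t)/t$, or via a direct cosine-law calculation using monotonicity of $\kappa$-comparison angles to identify the limit of $\sphericalangle^0_{\gamma(t)}(p,m_t)$ and feed it into the exact identity $d^2(p,m_t)=d^2(p,\gamma(t))+d^2(\gamma(t),m_t)-2\,d(p,\gamma(t))d(\gamma(t),m_t)\cos\sphericalangle^0_{\gamma(t)}(p,m_t)$.
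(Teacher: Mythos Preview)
The paper does not prove this lemma; it is quoted from \citet{LanSch97} and used as a black box in the proof of Lemma~\ref{lem:dpfisconv}. There is therefore no in-paper argument to compare your attempt against, and you are already going further than the author does.

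On the substance: your lower bound via Theorem~\ref{thm:curvequiv}\ref{it:pointsonsides} and a Taylor expansion of the $\kappa$-law of cosines is correct and standard. The upper bound, which you yourself flag as the obstacle, is a genuine gap, and neither of your proposed fixes closes it in the paper's generality. First, your framing presupposes that the midpoint $\mu$ of $\dot\gamma$ and $\dot\sigma$ exists in $T_pM$, which requires $\Sigma_p$ to contain a direction at the correct angular distances from $\vec\gamma$ and $\vec\sigma$; this holds when $\Sigma_p$ is a length space (true for locally compact Alexandrov spaces) but is not guaranteed here, where $M$ may be non-proper. Second, pointed Gromov--Hausdorff convergence of $(M,d/t,p)$ to $T_pM$ is likewise a locally compact phenomenon, and your alternative cosine-law route is circular as written, since $\sphericalangle^0_{\gamma(t)}(p,m_t)$ is \emph{defined} through the unknown $d(p,m_t)$. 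One workable direct argument avoids both issues: in the two sub-triangles $\{p,\gamma(t),m_t\}$ and $\{p,\sigma(t),m_t\}$ the true adjacent angles at $m_t$ sum to $\pi$, and since each dominates its $\kappa$-comparison angle (by $\curv\ge\kappa$), the two comparison angles at $m_t$ sum to at most $\pi$; gluing the two comparison triangles along their common side and invoking Alexandrov's lemma then produces the matching upper bound on $d(p,m_t)$.
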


We can now establish the following result.

\begin{lem}[\citealp{AleKapPet19}, Lemma 11.2.3]
\label{lem:dpfisconv}
Let $(M,d)$ be geodesic with lower bounded curvature, in the sense of Definition \ref{def:4pc}. Suppose $f:M\to\R$ is locally Lipschitz at $p$ and $\alpha$-concave. Then for all $u,v\in T_pM$, 
\[\sup_{w\in T_pM:\|w\|_p=1}d_pf(w)\ge \frac{d_pf(u)+d_pf(v)}{\sqrt{\|u\|^2_p+2\langle u,v\rangle_p+\|v\|^2_p}}.\]
\end{lem}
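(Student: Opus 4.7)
The plan is to exploit a midpoint-comparison argument: given geodesic representatives $\gamma,\sigma$ of $u,v$, use $\alpha$-concavity of $f$ to bound $f$ from below at the midpoint $m_t$ of $\gamma(t)$ and $\sigma(t)$, and to bound it from above using the differential along a geodesic $\eta_t$ from $p$ to $m_t$. Lemma \ref{lem:ls} then identifies the correct asymptotic for $d(p,m_t)$ and produces the square root in the denominator.

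First, by density of $T'_pM$ in $T_pM$ together with the Lipschitz continuity of $d_pf$ established in Lemma \ref{lem:propdiff}, it is enough to prove the inequality when $u=\dot\gamma$ and $v=\dot\sigma$ for some $\gamma,\sigma\in\Gamma_p$ defined on $[0,1]$. Set $S:=\sup_{w\in T_pM,\,\|w\|_p=1}d_pf(w)$ and assume $S<+\infty$ (otherwise the claim is trivial). For each small $t>0$, pick a geodesic $\delta_t:[0,1]\to M$ between $\gamma(t)$ and $\sigma(t)$, and let $m_t:=\delta_t(1/2)$, $\ell_t:=d(p,m_t)$. Applying $\alpha$-concavity of $f$ along $\delta_t$ at parameter $1/2$ yields
\[f(m_t)\ge \tfrac{1}{2}\bigl[f(\gamma(t))+f(\sigma(t))\bigr]-\tfrac{\alpha}{8}d^2(\gamma(t),\sigma(t)).\]
On the other hand, letting $\eta_t:[0,\ell_t]\to M$ be a unit-speed geodesic from $p$ to $m_t$ (trivially handled when $\ell_t=0$), formula \eqref{eq:diffasinf} evaluated at parameter $\ell_t$ gives, using $\|\dot\eta_t\|_p=1$,
\[f(m_t)-f(p)\le \ell_t\, d_pf(\dot\eta_t)+\tfrac{\alpha}{2}\ell_t^2\le \ell_t S+\tfrac{\alpha}{2}\ell_t^2.\]

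Subtracting $f(p)$ from the first inequality, dividing by $t$, and chaining with the second delivers
\[\frac{f(\gamma(t))-f(p)}{t}+\frac{f(\sigma(t))-f(p)}{t}\le \frac{2\ell_t}{t}\,S+\frac{\alpha\,\ell_t^2}{t}+\frac{\alpha\, d^2(\gamma(t),\sigma(t))}{4t}.\]
Letting $t\downarrow 0$, the left-hand side tends to $d_pf(u)+d_pf(v)$ by definition of the differential; by Lemma \ref{lem:ls}, $2\ell_t/t\to \sqrt{\|u\|_p^2+2\langle u,v\rangle_p+\|v\|_p^2}$; and both $\ell_t^2/t$ and $d^2(\gamma(t),\sigma(t))/t$ vanish since $\ell_t/t$ and $d(\gamma(t),\sigma(t))/t$ remain bounded. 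Rearranging yields the claim.

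The main delicacy is synchronizing the midpoint lower bound with the supremum formula \eqref{eq:diffasinf} at precisely the parameter $\ell_t$, so that the $\alpha$-dependent remainders $\alpha\ell_t^2/t$ and $\alpha d^2(\gamma(t),\sigma(t))/(4t)$ drop out in the limit; the whole argument hinges on the first-order controls of $\ell_t/t$ and $d(\gamma(t),\sigma(t))/t$ provided by Lemma \ref{lem:ls} and the definition of $|\cdot-\cdot|_p$, and on the correct normalization of $\dot\eta_t$ so that the bound $d_pf(\dot\eta_t)\le S$ is available.
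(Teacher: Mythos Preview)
Your proof is correct and follows essentially the same midpoint-comparison argument as the paper: reduce by density to $u=\dot\gamma$, $v=\dot\sigma$, apply $\alpha$-concavity along the geodesic joining $\gamma(t)$ and $\sigma(t)$ to lower-bound $f(m_t)$, upper-bound $f(m_t)-f(p)$ via the sup formula \eqref{eq:diffasinf} along a geodesic from $p$ to $m_t$, and invoke Lemma \ref{lem:ls} for the asymptotic of $d(p,m_t)/t$. The only cosmetic difference is that the paper parametrizes that geodesic on $[0,t]$ with speed $|\omega_t'|=\ell_t/t$ and normalizes afterwards, whereas you take unit speed on $[0,\ell_t]$ from the start.
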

\begin{proof}[Proof of Lemma \ref{lem:dpfisconv}]
By density of $T'_pM$ in $T_pM$ and continuity of $d_pf$, it is enough to prove the result for all $u=\dot\gamma\in T'_pM$ and $v=\dot\sigma\in T'_pM$ where $\gamma,\sigma\in \Gamma_p$. For two such geodesics $\gamma:[0,1]\to M$ and $\sigma:[0,1]\to M$, and all $t\in(0,1]$, let $\delta_{t}:[0,1]\to M$ be a geodesic connecting $\gamma(t)$ to $\sigma(t)$. Then, by concavity of $f$,  
\begin{equation}
\label{lem:dpfisconv:e2} 
2f(\delta_{t}(1/2))\ge f(\gamma(t))+f(\sigma(t))-\frac{\alpha}{4}d^2(\gamma(t),\sigma(t)).
\end{equation}
Observing that 
\begin{equation}
\nonumber
f(\gamma(t))=f(p)+ td_pf(\dot\gamma)+{\rm o}(t),\quad f(\sigma(t))=f(p)+td_pf(\dot\sigma)+{\rm o}(t),
\end{equation}
and that
\begin{equation}
\nonumber
d^2(\gamma(t),\sigma(t))=t^2\|\dot\gamma-\dot\sigma\|^2_p+{\rm o}(t^2),
\end{equation}
we deduce from \eqref{lem:dpfisconv:e2} that
\[\liminf_{t\downarrow0}\left(\frac{f(\delta_{t}(1/2))-f(p)}{t}\right)\ge  \frac{d_pf(\dot\gamma)+d_pf(\dot\sigma)}{2}.\]
Now, for all $t\in(0,1]$, let $\omega_t:[0,t]\to M$ be a geodesic connecting $p$ to $\delta_t(1/2)$. Then, according to Lemma \ref{lem:propdiff}, we see that
\begin{align*}
    \frac{f(\delta_{t}(1/2))-f(p)}{t}&=\frac{f(\omega_{t}(t))-f(p)}{t}\\
    &\le \sup_{s\in(0,t]}\left\{\frac{f(\omega_{t}(s))-f(p)}{s}-\frac{\alpha s}{2}|\omega'_t|^2\right\} +\frac{\alpha t}{2}|\omega'_t|^2\\
    &= d_pf(\dot\omega_t)+\frac{\alpha t}{2}|\omega'_t|^2\\
    &=|\omega'_t|\left(d_pf(|\omega'_t|^{-1}\dot\omega_t)+\frac{\alpha t}{2}|\omega'_t|\right).
\end{align*}
Since $\||\omega'_t|^{-1}\dot\omega_t\|_p=1$, for all $t\in(0,1]$, it remains only to prove that
\[4|\omega'_t|^2= \|\dot\gamma\|^2_p+2\langle\dot\gamma,\dot\sigma\rangle_p+\|\dot\sigma\|^2_p+\e(t),\]
where $\e(t)\to 0$ as $t\to 0$. But this follows by observing that, 
\begin{align*}
    4\lim_{t\downarrow 0}|\omega'_t|^2&=4\lim_{t\downarrow 0}\frac{d^2(p,\omega_t(t))}{t^2}\\
    &=\|\dot\gamma\|^2_p+2\langle\dot\gamma,\dot\sigma\rangle_p+\|\dot\sigma\|^2_p,
\end{align*}
according to Lemma \ref{lem:ls}.
\end{proof}

We are now in position to prove existence and uniqueness of gradients for $\alpha$-concave functions.
\begin{proof}[Proof of Lemma \ref{lem:grad}]
 Let $w_n=[\xi_n,1]\in T_pM$ be such that
\[\lim_{n\to+\infty} d_pf(w_n)=d_{\rm sup}:=\sup_{w\in T_pM:\|w\|_p=1}d_pf(w).\]
Applying Lemma \ref{lem:dpfisconv}, we get for all $m,n\ge 1$,
\[d_{\rm sup}\ge \frac{d_pf(w_n)+d_pf(w_m)}{\sqrt{2+2\langle w_n,w_m\rangle_p}}=\frac{d_pf(w_n)+d_pf(w_m)}{\sqrt{2+2\cos\sphericalangle_p(\xi_n,\xi_m)}}.\]
Letting $n,m\to+\infty$, this implies that $\xi_n$ is a Cauchy sequence in the complete space $\Sigma_p$, and hence converges towards some $\xi_{\infty}\in\Sigma_p$. Denoting $w_{\infty}=[\xi_{\infty},1]$, we have $d_pf(w_{\infty})=d_{\rm sup}$ by continuity. Now, denote \[g:= d_{\rm sup}w_{\infty},\] 
and select an arbitrary $w\in T_pM$. Then applying Lemma \ref{lem:dpfisconv} to $u=w_{\infty}$ and $v=\e w$, we get
\[d_{\rm sup}\ge \frac{d_{\rm sup}+\e d_pf(w)}{\sqrt{1+2\e\langle w_{\infty},w\rangle_p+\e^2\|w\|^2_p}}=d_{\rm sup}+\e(d_pf(w)-\langle g,w\rangle_p)+{\rm o}(\e).\]
Letting $\e\downarrow 0$, we obtain  
$d_pf(w)\le \langle g,w\rangle_p$. Since it is clear by construction that $d_pf(g)=\|g\|^2_p$, this proves the existence of a gradient. To prove uniqueness, consider another $g'$ satisfying the same properties. Than, we have
\[\|g'\|^2_p=d_pf(g')\le\langle g,g'\rangle_p\quad\mbox{and}\quad \|g\|^2_p=d_pf(g)\le\langle g,g'\rangle_p.\]
As a result, 
\[0\le\|g'-g\|^2_p=\|g'\|^2_p-2\langle g,g'\rangle_p+\|g\|^2\le 0,\]
which imposes $g=g'$ and completes the proof.
\end{proof}

\textbf{Acknowledgments}. I would like to thank Thibaut Le Gouic and Philippe Rigollet for discussions on Metric Geometry that sparked my interest for the problem addressed in the paper.

\bibliography{Jensen}
\end{document}